\newcommand{\stkout}[1]{\ifmmode\text{\sout{\ensuremath{#1}}}\else\sout{#1}\fi}
\newtheorem{theorem}{Theorem}[section]
\theoremstyle{definition}
\newtheorem{definition}{Definition}[section]
\theoremstyle{remark}
\numberwithin{theorem}{section}
\numberwithin{equation}{section}
\crefname{section}{Section}{Sections}
\crefname{subsection}{Section}{Sections}
\crefname{condition}{Condition}{Conditions}
\crefname{hypothesis}{Hypothesis}{Conditions}
\crefname{assumption}{Assumption}{Assumptions}
\crefname{lemma}{Lemma}{Lemmas}
\crefname{fact}{Fact}{Facts}
\Crefname{figure}{Figure}{Figures}
\newcommand{\vertiii}[1]{{\left\vert\kern-0.25ex\left\vert\kern-0.25ex\left\vert #1 
    \right\vert\kern-0.25ex\right\vert\kern-0.25ex\right\vert}}
\newcommand{\Uadm}{\mathfrak U}
\newcommand{\Act}{\mathbb{U}}
\newcommand{\Usm}{\mathfrak U_{\mathsf{sm}}}
\newcommand{\Udsm}{\mathfrak U_{\mathsf{dsm}}} 
\newcommand{\Um}{\mathfrak U_{\mathsf{m}}}
\newcommand{\Udm}{\mathfrak U_{\mathsf{dm}}}
\newcommand{\pV}{\mathrm{V}} 
\newcommand{\pv}{\mathrm{v}} 
\newcommand{\sB}{{\mathscr{B}}}  
\newcommand{\cC}{{\mathcal{C}}}   
\newcommand{\sE}{{\mathscr{E}}} 
\newcommand{\sF}{{\mathfrak{F}}}   
\newcommand{\cJ}{{\mathcal{J}}}  
\newcommand{\sL}{{\mathscr{L}}}  %
\newcommand{\Lp}{{L}}            
\newcommand{\Lyap}{{\mathcal{V}}}  
\newcommand{\RR}{\mathds{R}}
\newcommand{\Rd}{{\mathds{R}^{d}}}
\DeclareMathOperator{\Exp}{\mathbb{E}}
\newcommand{\D}{\mathrm{d}}
\newcommand{\cD}{\mathcal{D}} 
\newcommand{\Sob}{{\mathscr W}}    
\newcommand{\Sobl}{{\mathscr W}_{\text{loc}}} 
\newcommand{\df}{:=}
\newcommand{\transp}{^{\mathsf{T}}}
\DeclareMathOperator*{\trace}{Tr}
\newcommand{\grad}{\nabla}
\newcommand{\uuptau}{{\Breve\uptau}}
\newcommand{\abs}[1]{\lvert#1\rvert}
\newcommand{\norm}[1]{\lVert#1\rVert}
\definecolor{dmagenta}{rgb}{.4,.1,.5}
\definecolor{dblue}{rgb}{.0,.0,.5}
\definecolor{mblue}{rgb}{.0,.0,.7}
\definecolor{ddblue}{rgb}{.0,.0,.4}
\definecolor{dred}{rgb}{.7,.0,.0}
\definecolor{dgreen}{rgb}{.0,.5,.0}
\definecolor{Eeom}{rgb}{.0,.0,.5}
\begin{document}
\title[Near optimality of smooth policies]
{Near Optimality of Lipschitz and Smooth Policies in Controlled Diffusions}

\author[Somnath Pradhan]{Somnath Pradhan$^\dag$}
\address{$^\dag$Department of Mathematics and Statistics,
Queen's University, Kingston, ON, Canada}
\email{sp165@queensu.ca}

\author[Serdar Y\"{u}ksel]{Serdar Y\"{u}ksel$^{\ddag}$}
\address{$^\ddag$Department of Mathematics and Statistics,
Queen's University, Kingston, ON, Canada}
\email{yuksel@queensu.ca}

\begin{abstract}
For optimal control of diffusions under several criteria, due to computational or analytical reasons, many studies have a apriori assumed control policies to be Lipschitz or smooth, often with no rigorous analysis on whether this restriction entails loss. While optimality of Markov/stationary Markov policies for expected finite horizon/infinite horizon (discounted/ergodic) cost and cost-up-to-exit time optimal control problems can be established under certain technical conditions, an optimal solution is typically only measurable in the state (and time, if the horizon is finite) with no apriori additional structural properties. In this paper, building on our recent work [S. Pradhan and S. Y\"uksel, Continuity of cost in Borkar control topology and implications on discrete space and time approximations for controlled diffusions under several criteria, Electronic Journal of Probability (2024)] establishing the regularity of optimal cost on the space of control policies under the Borkar control topology for a general class of controlled diffusions in $\Rd$, we establish near optimality of smooth or Lipschitz continuous policies for optimal control under expected finite horizon, infinite horizon discounted, infinite horizon average, and up-to-exit time cost criteria. Under mild assumptions, we first show that smooth/Lipschitz continuous policies are dense in the space of Markov/stationary Markov policies under the Borkar topology. Then utilizing the continuity of optimal costs as a function of policies on the space of Markov/stationary policies under the Borkar topology, we establish that optimal policies can be approximated by smooth/Lipschitz continuous policies with arbitrary precision. While our results are extensions of our recent work, the practical significance of an explicit statement and accessible presentation dedicated to Lipschitz and smooth policies, given their prominence in the literature, motivates our current paper. 
\end{abstract}
\keywords{controlled diffusions, near optimality, smooth policy, Lipschitz continuous policy, Hamilton-Jacobi-Bellman equation}

\subjclass[2000]{Primary: 60J60, 93E20 Secondary: 35Q93}

\maketitle

\section{Introduction and Importance of Lipschitz or Smooth Policies}

Smooth control policies have been imposed in many papers on optimal stochastic control under a variety of setups, as we detail further below, often with no rigorous analysis on whether the restriction to smooth policies entails loss in optimality or near-optimality.

With this motivation, our goal is to analyze the smooth approximations of optimal controls for controlled diffusions under various cost evaluation criteria. To achieve this, we leverage the regularity properties of the induced cost as a function of control policy under the Borkar topology  \cite{Bor89} \cite[Section~2.4]{ABG-book} (see also \cite{pradhan2022near} for further context). Then under this control topology, we establish that smooth or Lipschitz continuous policies form a dense subset within the space of Markov/stationary Markov policies. This denseness result, coupled with the continuity properties established in \cite{pradhan2022near}, allows us to derive general approximation results for optimal control policies. These results hold for a broad class of controlled diffusions evolving in the whole space $\Rd$\,.



Smooth or Lipschitz restrictions of policies play a crucial role in a variety of contexts:
\begin{itemize}
\item[•]\textbf{Existence of strong solutions under Lipschitz policies.} It is well known that under arbitrary measurable policies, existence of strong solutions to controlled stochastic differential equations is not guaranteed, and one typically needs to be content with weak solutions. However, Markov policies which are Lipschitz in their arguments, such as state, allow for the standard contraction based existence analysis for controlled stochastic differential equations \cite{fleming2012deterministic,CainesMeanField1,ABG-book,JZ-JMLR23}. 

\item[•]\textbf{Learning algorithms under Lipschitz policies.} The analysis of the near-optimality of smooth/Lipschitz continuous policies plays significant role in the development of learning algorithms for controlled diffusions in $\Rd$\,. As noted in \cite{lechner2022stability,richards2018lyapunov,chang2019neural}, Lipschitz regularity in policies facilitate efficient parametric representations and approximations in neural network based learning algorithms or episodic learning algorithms, and the Lipschitz restriction has been a common assumption on control policies. \cite{JZ-JMLR23} studies a $Q$-learning algorithm for controlled diffusion model under the assumption that admissible policies are Lipschitz continuous in the space variable and \cite{reisinger2023linear} studies policy gradient based learning under Lipschitz regularity conditions on policies. 

\item[•]\textbf{Robustness to noise approximations and model/data perturbations under Lipschitz policies.} For many practical systems the driving noise is modelled as an ideal Brownian, whereas in general this idealization is not accurate. Near optimality of Lipschitz continuous policies plays crucial role in establishing robustness result with respect to such incorrect noise modeling. In \cite{pradhanselkyuksel2023}, it is shown that within the class of Lipschitz continuous control policies, an optimal solution for a Brownian idealized model is near optimal for a true system driven by non-Brownian (but near-Brownian) noise, e.g., Wong-Zakai piecewise linear approximation, Karhunen-Lo\'eve approximations, mollified Brownian motion, fractional Brownian motion. Additionally, in a large variety of contexts, it is well documented that Lipschitz policies entail strong robustness properties in the presence of uncertainties in modeling and learning \cite{fazlyab2019efficient}.

\item[•]\textbf{Discrete-time approximations under Lipschitz policies.} Approximations are of significant practical consequence, they enable us to construct suitable numerical solutions to continuous-time control problems. Techniques involving Markov chain approximations aim to directly approximate a continuous-time state process using a discrete-time controlled Markov chain. The regularity properties of induced cost under Lipschitz continuous policies have significant implications for the construction of suitable discrete-time controlled Markov chain model to approximate controlled diffusion models, as one can show that invariant measures of discrete approximations converge to those of the continuous-limit under Lipschitz control policies and explicit convergence bounds can be obtained for the error between piece-wise constant path solutions and those of the continuous limit. By using near optimality of Lipschitz continuous policies, it can be shown that the discrete-time sampled controlled Markov chain and the corresponding value function asymptotically approximates the continuous time state process and the associated value function. Results along this direction appear in \cite{KD92} \cite{KH77}, \cite{KH01}; it can be shown that if one applies Lipschitz continuous policies such discrete-time Markov chain approximation results can be refined both with respect to the convergence of solution paths as well as the convergence of invariant measures (of discrete-time sampled chains to those of the continuous limit) under stationary Lipschitz policies.

\end{itemize}

%

\subsection*{Contributions and main results} In this manuscript our goal is to study the following approximation problem: For a general class of controlled diffusions in $\Rd$ under what conditions one can approximate the optimal control policies for both finite/infinite horizon cost criteria by smooth/Lipschitz continuous policies?   

In order to address these questions, we first show that smooth/Lipschitz continuous polices are dense in the space of Markov/stationary Markov policies (see Theorem~\ref{TApproxSmoothPol1}). Then exploiting the regularity property of costs as a function of control policies (under Borkar topology \cite{Bor89}) obtained in \cite{pradhan2022near}, for both finite horizon and infinite horizon (discounted/ergodic), we establish these near optimality results (see Theorem~\ref{T1.1})\,. For the ergodic cost case we have studied this problem under two different additional assumptions, i.e., under either near-monotone type structural assumption on the running cost, or Lyapunov type stability assumption on the dynamics of the system. In particular, we establish the following:
\begin{itemize}
\item[•] We prove the denseness of smooth/Lipschitz continuous policies in the space of deterministic Markov/stationary Markov policies\,.
\item[•] We establish near-optimality of smooth/Lipschitz continuous policies for both finite and infinite horizon control problems for a general class of controlled diffusions in $\Rd$\,.
\end{itemize} 

The remaining part of the paper is organized as follows. In Section~\ref{PD} we provide the problem formulation\,. The denseness of smooth/Lipschitz continuous policies within the space of Markov/stationary Markov policies under Borkar topology is proved in Section~\ref{DensSmooth}. Finally, in Section~\ref{NearSmoothOpti}, using the denseness and continuity results we show that for finite and infinite horizon costs under the Borkar topology optimal policies can be approximated arbitrarily well by smooth/Lipschitz continuous policies\,.
\subsection*{Notation:}
\begin{itemize}
\item For any set $A\subset\RR^{d}$, by $\uptau(A)$ we denote \emph{first exit time} of the process $\{X_{t}\}$ from the set $A\subset\RR^{d}$, defined by
\begin{equation*}
\uptau(A) \,\df\, \inf\,\{t>0\,\colon X_{t}\not\in A\}\,.
\end{equation*}
\item $\sB_{r}$ denotes the open ball of radius $r$ in $\RR^{d}$, centered at the origin,
\item $\uptau_{r}$, $\uuptau_{r}$ denote the first exist time from $\sB_{r}$, $\sB_{r}^c$ respectively, i.e., $\uptau_{r}\df \uptau(\sB_{r})$, and $\uuptau_{r}\df \uptau(\sB^{c}_{r})$.
\item By $\trace S$ we denote the trace of a square matrix $S$.
\item For any domain $\cD\subset\RR^{d}$, the space $\cC^{k}(\cD)$ ($\cC^{\infty}(\cD)$), $k\ge 0$, denotes the class of all real-valued functions on $\cD$ whose partial derivatives up to and including order $k$ (of any order) exist and are continuous.
\item $\cC_{\mathrm{c}}^k(\cD)$ denotes the subset of $\cC^{k}(\cD)$, $0\le k\le \infty$, consisting of functions that have compact support. This denotes the space of test functions.
\item $\cC_{b}(\Rd)$ denotes the class of bounded continuous functions on $\Rd$\,.
\item $\cC^{k}_{0}(\cD)$, denotes the subspace of $\cC^{k}(\cD)$, $0\le k < \infty$, consisting of functions that vanish in $\cD^c$.
\item $\cC^{k,r}(\cD)$, denotes the class of functions whose partial derivatives up to order $k$ are H\"older continuous of order $r$.
\item $\Lp^{p}(\cD)$, $p\in[1,\infty)$, denotes the Banach space
of (equivalence classes of) measurable functions $f$ satisfying
$\int_{\cD} \abs{f(x)}^{p}\,\D{x}<\infty$.
\item $\Sob^{k,p}(\cD)$, $k\ge0$, $p\ge1$ denotes the standard Sobolev space of functions on $\cD$ whose weak derivatives up to order $k$ are in $\Lp^{p}(\cD)$, equipped with its natural norm (see, \cite{Adams})\,.
\item  If $\mathcal{X}(Q)$ is a space of real-valued functions on $Q$, $\mathcal{X}_{\mathrm{loc}}(Q)$ consists of all functions $f$ such that $f\varphi\in\mathcal{X}(Q)$ for every $\varphi\in\cC_{\mathrm{c}}^{\infty}(Q)$. In a similar fashion, we define $\Sobl^{k, p}(\cD)$.
\item  For $\mu > 0$, let $e_{\mu}(x) = e^{-\mu\sqrt{1+\abs{x}^2}}$\,, $x\in\Rd$\,. Then $f\in \Lp^{p,\mu}((0, T)\times \Rd)$ if $fe_{\mu} \in \Lp^{p}((0, T)\times \Rd)$\,. Similarly, $\Sob^{1,2,p,\mu}((0, T)\times \Rd) = \{f\in \Lp^{p,\mu}((0, T)\times \Rd) \mid f, \frac{\partial f}{\partial t}, \frac{\partial f}{\partial x_i}, \frac{\partial^2 f}{\partial x_i \partial x_j}\in \Lp^{p,\mu}((0, T)\times \Rd) \}$ with natural norm (see \cite{BL84-book})
\begin{align*}
\norm{f}_{\Sob^{1,2,p,\mu}} =& \norm{\frac{\partial f}{\partial t}}_{\Lp^{p,\mu}((0, T)\times \Rd)} + \norm{f}_{\Lp^{p,\mu}((0, T)\times \Rd)} \\
& + \sum_{i}\norm{\frac{\partial f}{\partial x_i}}_{\Lp^{p,\mu}((0, T)\times \Rd)} + \sum_{i,j}\norm{\frac{\partial^2 f}{\partial x_i \partial x_j}}_{\Lp^{p,\mu}((0, T)\times \Rd)}\,. 
\end{align*} Also, we use the following convention $\norm{f}_{\Sob^{1,2,p,\mu}} = \norm{f}_{1,2,p,\mu}$\,.
\end{itemize}
\section{Description of the problem}\label{PD} Let $\Act$ be a convex compact subset in a Euclidean space $\RR^m$, and $\pV=\mathscr{P}(\Act)$ be the space of probability measures on  $\Act$ with topology of weak convergence. Let $$b : \Rd \times \Act \to  \Rd, $$ $$ \sigma : \Rd \to \RR^{d \times d},\, \sigma = [\sigma_{ij}(\cdot)]_{1\leq i,j\leq d},$$ be given functions. We consider a stochastic optimal control problem whose state is evolving according to a controlled diffusion process given by the solution of the following stochastic differential equation (SDE)
\begin{equation}\label{E1.1}
\D X_t \,=\, b(X_t,U_t) \D t + \upsigma(X_t) \D W_t\,,
\quad X_0=x\in\Rd.
\end{equation}
Where 
\begin{itemize}
\item
$W$ is a $d$-dimensional standard Wiener process, defined on a complete probability space $(\Omega, \sF, \mathbb{P})$.
\item 
 We extend the drift term $b : \Rd \times \pV \to  \Rd$ as follows:
\begin{equation*}
b (x,\mathrm{v}) = \int_{\Act} b(x,\zeta)\mathrm{v}(\D \zeta), 
\end{equation*}
for $\mathrm{v}\in\pV$.
\item
$U$ is a $\pV$ valued process satisfying following non-anticipativity condition: for $s<t\,,$ $W_t - W_s$ is independent of
$$\sF_s := \,\,\mbox{the completion of}\,\,\, \sigma(X_0, U_r, W_r : r\leq s)\,\,\,\mbox{relative to} \,\, (\sF, \mathbb{P})\,.$$  
\end{itemize}
The process $U$ is called an \emph{admissible} control, and the set of all admissible controls is denoted by $\Uadm$ (see, \cite{BG90}). By a Markov control we mean an admissible control of the form $U_t = v(t,X_t)$ for some Borel measurable function $v:\RR_+\times\Rd\to\pV$. The space of all Markov controls is denoted by $\Um$\,. If the function $v$ is independent of $t$, i.e., $U_t = v(X_t)$ then $U$ or by an abuse of notation $v$ itself is called a stationary Markov control. The set of all stationary Markov controls is denoted by $\Usm$. A policy $v\in \Um$ is said to be deterministic Markov policy if $v(x) = \delta_{f(t,x)}$ for some measurable map $f:[0,\infty)\times\Rd\to \Act$. Let $\Udm$ be the space of all deterministic Markov policies\,. Similarly, a policy $v\in \Usm$ is said to be deterministic stationary Markov policy if $v(x) = \delta_{f(x)}$ for some measurable map $f:\Rd\to \Act$. Let $\Udsm$ be space of all deterministic stationary Markov policies\,.

To ensure existence and uniqueness of strong solutions of \cref{E1.1}, we impose the following assumptions on the drift $b$ and the diffusion matrix $\upsigma$\,. 
\begin{itemize}
\item[\hypertarget{A1}{{(A1)}}]
\emph{Local Lipschitz continuity:\/}
The function
$\upsigma\,=\,\bigl[\upsigma^{ij}\bigr]\colon\RR^{d}\to\RR^{d\times d}$,
$b\colon\Rd\times\Act\to\Rd$ are locally Lipschitz continuous in $x$ (uniformly with respect to the other variables for $b$). In other words, for some constant $C_{R}>0$
depending on $R>0$, we have
\begin{equation*}
\abs{b(x,\zeta) - b(y, \zeta)}^2 + \norm{\upsigma(x) - \upsigma(y)}^2 \,\le\, C_{R}\,\abs{x-y}^2
\end{equation*}
for all $x,y\in \sB_R$ and $\zeta\in\Act$, where $\norm{\upsigma}\df\sqrt{\trace(\upsigma\upsigma\transp)}$\,. Also, we are assuming that $b$ is jointly continuous in $(x,\zeta)$.

\medskip
\item[\hypertarget{A2}{{(A2)}}]
\emph{Affine growth condition:\/}
$b$ and $\upsigma$ satisfy a global growth condition of the form
\begin{equation*}
\sup_{\zeta\in\Act}\, \langle b(x, \zeta),x\rangle^{+} + \norm{\upsigma(x)}^{2} \,\le\,C_0 \bigl(1 + \abs{x}^{2}\bigr) \qquad \forall\, x\in\RR^{d},
\end{equation*}
for some constant $C_0>0$.

\medskip
\item[\hypertarget{A3}{{(A3)}}]
\emph{Nondegeneracy:\/}
For each $R>0$, it holds that
\begin{equation*}
\sum_{i,j=1}^{d} a^{ij}(x)z_{i}z_{j}
\,\ge\,C^{-1}_{R} \abs{z}^{2} \qquad\forall\, x\in \sB_{R}\,,
\end{equation*}
and for all $z=(z_{1},\dotsc,z_{d})\transp\in\RR^{d}$,
where $a\df \frac{1}{2}\upsigma \upsigma\transp$.
\end{itemize}

\subsection{The Borkar Topology on Control Policies}\label{B-topo}
We now introduce the Borkar topology on stationary or Markov controls \cite{Bor89}
\begin{itemize}
\item[•]\emph{Topology of Stationary Policies:}
From \cite[Section~2.4]{ABG-book}, we have that the set $\Usm$ is metrizable with compact metric.
\begin{definition}[Borkar topology of stationary Markov policies]\label{DefBorkarTopology1A}
A sequence $v_n\to v$ in $\Usm$ if and only if
\begin{equation}\label{BorkarTopology}
\lim_{n\to\infty}\int_{\Rd}f(x)\int_{\Act}g(x,\zeta)v_{n}(x)(\D \zeta)\D x = \int_{\Rd}f(x)\int_{\Act}g(x,\zeta)v(x)(\D \zeta)\D x
\end{equation}
for all $f\in L^1(\Rd)\cap L^2(\Rd)$ and $g\in \cC_b(\Rd\times \Act)$ (for more details, see \cite[Lemma~2.4.1]{ABG-book}, \cite{Bor89})\,.
\end{definition}
\item[•]\emph{Topology of Markov Policies:} In the proof of \cite[Theorem~3.1, Lemma~3.1]{Bor89}, replacing $A_n$ by $\hat{A}_n = A_n\times [0,n]$ and following the arguments as in proof of \cite[Theorem~3.1, Lemma~3.1]{Bor89}, we have the following topology on the space of Markov policies $\Um$\,. 
\begin{definition}[Borkar topology of Markov policies]\label{BKTP1}
A sequence $v_n\to v$ in $\Um$ if and only if
\begin{equation}\label{BorkarTopologyM}
\lim_{n\to\infty}\int_{0}^{\infty}\int_{\Rd}f(t,x)\int_{\Act}g(x,t,\zeta)v_{n}(t,x)(\D \zeta)\D x \D t = \int_{0}^{\infty}\int_{\Rd}f(t,x)\int_{\Act}g(x,t,\zeta)v(x,t)(\D \zeta)\D x \D t 
\end{equation}
for all $f\in L^1(\Rd\times [0, \infty))\cap L^2(\Rd\times [0, \infty))$ and $g\in \cC_b(\Rd\times [0, \infty)\times \Act)$\,.
\end{definition}
\end{itemize}
It is well known that under the hypotheses \hyperlink{A1}{{(A1)}}--\hyperlink{A3}{{(A3)}}, for any admissible control \cref{E1.1} has a unique strong solution \cite[Theorem~2.2.4]{ABG-book}, and under any stationary Markov strategy \cref{E1.1} has a unique strong solution which is a strong Feller (therefore strong Markov) process \cite[Theorem~2.2.12]{ABG-book}.

\subsection{Cost Criteria}
Let $c\colon\Rd\times\Act \to \RR_+$ be the \emph{running cost} function. We assume that $c$ is bounded, jointly continuous in $(x, \zeta)$ and locally Lipschitz continuous in its first argument uniformly with respect to $\zeta\in\Act$. We extend $c\colon\Rd\times\pV \to\RR_+$ as follows: for $\pv \in \pV$
\begin{equation*}
c(x,\pv) := \int_{\Act}c(x,\zeta)\pv(\D\zeta)\,.
\end{equation*}
In this article, we consider the problem of finding near-optimal smooth/Lipschitz continuous policies for finite horizon cost, cost up to an exit time, $\alpha$-discounted cost and ergodic cost, respectively:
\subsubsection{Finite Horizon Cost}
For $U\in \Uadm$, the associated \emph{finite horizon cost} is given by
\begin{equation}\label{FiniteCost1}
\cJ_{T}(x, U) = \Exp_x^{U}\left[\int_0^{T} c(X_s, U_s) \D{s} + c_{T}(X_T)\right]\,,
\end{equation} and the optimal value is defined as
\begin{equation}\label{FiniteCost1Opt}
\cJ_{T}^*(x) \,\df\, \inf_{U\in \Uadm}\cJ_{T}(x, U)\,.
\end{equation} where $T > 0$ is fixed, $c_{T}:\Rd\to \RR_{+}$ is the terminal cost and $X(\cdot)$ is the solution of \cref{E1.1} corresponding to $U\in\Uadm$ and $\Exp_x^{U}$ is the expectation with respect to the law of the process $X(\cdot)$ with initial condition $x$. The controller tries to minimize \cref{FiniteCost1} over his/her admissible policies $\Uadm$\,. Thus, a policy $U^*\in \Uadm$ is said to be optimal if we have 
\begin{equation}\label{FiniteCost1Opt1}
\cJ_{T}(x, U^*) = \cJ_{T}^*(x)\,.
\end{equation}
\subsubsection{Discounted Cost Criterion} 
For $U \in\Uadm$, the associated \emph{$\alpha$-discounted cost} is given by
\begin{equation}\label{EDiscost}
\cJ_{\alpha}^{U}(x, c) \,\df\, \Exp_x^{U} \left[\int_0^{\infty} e^{-\alpha s} c(X_s, U_s) \D s\right],\quad x\in\Rd\,,
\end{equation} where $\alpha > 0$ is the discounted factor\,. The controller tries to minimize \cref{EDiscost} over his/her admissible policies $\Uadm$\,. Thus, a policy $U^{*}\in \Uadm$ is said to be optimal if for all $x\in \Rd$ 
\begin{equation}\label{OPDcost}
\cJ_{\alpha}^{U^*}(x, c) = \inf_{U\in \Uadm}\cJ_{\alpha}^{U}(x, c) \,\,\, (\,=:\, \,\, V_{\alpha}(x))\,,
\end{equation} where $V_{\alpha}(x)$ is called the optimal value.
\subsubsection{Ergodic Cost Criterion}
For $U\in \Uadm$, the associated \emph{ergodic cost} is given by
\begin{equation}\label{ErgCost1}
\sE_{x}(c, U) = \limsup_{T\to \infty}\frac{1}{T}\Exp_x^{U}\left[\int_0^{T} c(X_s, U_s) \D{s}\right]\,.
\end{equation} and the optimal value is defined as
\begin{equation}\label{ErgCost1Opt}
\sE^*(c) \,\df\, \inf_{x\in\Rd}\inf_{U\in \Uadm}\sE_{x}(c, U)\,.
\end{equation}
Then a policy $U^*\in \Uadm$ is said to be optimal if we have 
\begin{equation}\label{ErgCost1Opt1}
\sE_{x}(c, U^*) = \sE^*(c)\,.
\end{equation}

\subsubsection{Control up to an Exit Time}\label{exitTimeSection}
For each $U\in\Uadm$ the associated cost is given as
\begin{equation*}
\hat{\cJ}_{e}^{U}(x) \,\df \, \Exp_x^{U} \left[\int_0^{\tau(O)} e^{-\int_{0}^{t}\delta(X_s, U_s) \D s} c(X_t, U_t) \D t + e^{-\int_{0}^{\tau(O)}\delta(X_s, U_s) \D s}c_{e}(X_{\tau(O)})\right],\quad x\in\Rd\,,
\end{equation*}
where $O\subset \Rd$ is a smooth bounded domain, $\tau(O) \,\df\,  \inf\{t \geq 0: X_t\notin O\}$, $\delta(\cdot, \cdot): \bar{O}\times\Act\to [0, \infty)$ is the discount function and $c_{e}:\bar{O}\to \RR_+$ is the terminal cost function. The optimal value is defined as \[\hat{\cJ}_{e}^{*}(x)=\inf_{U\in \Uadm}\hat{\cJ}_{e}^{U}(x),\]
We assume that $\delta\in \cC(\bar{O}\times \Act)$, $c_{e}\in\Sob^{2,p}(O)$. 


\subsection{Problems Studied}

The main purpose of this manuscript will be to address the following problems:
\begin{itemize}
\item[•]\textbf{Near optimality of smooth policies.} For any given $\epsilon > 0$, whether it is possible to construct a Lipschitz continuous/ smooth policy $v_{\epsilon}$ such that
\begin{itemize}
\item[•]\emph{for finite horizon cost:} $\cJ_{T}(x, v_{\epsilon})\leq \cJ_{T}^*(x) + \epsilon$\,?
\item[•]\emph{for discounted cost:} $\cJ_{\alpha}^{v_{\epsilon}}(x, c)\leq V_{\alpha}(x) + \epsilon$ \,?
\item[•]\emph{for ergodic cost:} $\sE_{x}(c, v_{\epsilon})\leq \sE^*(c) + \epsilon$ \,?
\item[•] \emph{for cost up to an exit time:} $\hat{\cJ}_{e}^{v_{\epsilon}}(x) \leq \hat{\cJ}_{e}^{*}(x) + \epsilon$ \,?
\end{itemize}
\end{itemize}
In this manuscript, we have shown that under a mild set of assumptions the answers to the above mentioned questions are affirmative. 

Let us introduce a parametric family of elliptic operator, which will be useful in our analysis\,. With $\zeta\in \Act$ treated as a parameter, we define a family of operators $\sL_{\zeta}$ mapping
$\cC^2(\Rd)$ to $\cC(\Rd)$ by
\begin{equation}\label{E-cI}
\sL_{\zeta} f(x) \,\df\, \trace\bigl(a(x)\grad^2 f(x)\bigr) + \,b(x,\zeta)\cdot \grad f(x)\,, 
\end{equation}
where $f\in \cC^2(\Rd)\cap\cC_b(\Rd)$\, and for $\pv \in\pV$ we extend $\sL_{\zeta}$ as follows:
\begin{equation}\label{EExI}
\sL_\pv f(x) \,\df\, \int_{\Act} \sL_{\zeta} f(x)\pv(\D \zeta)\,.
\end{equation}Also, for each $v \in\Usm$, we define
\begin{equation}\label{Efixstra}
\sL_{v} f(x) \,\df\, \trace(a\grad^2 f(x)) + b(x,v(x))\cdot\grad f(x)\,.
\end{equation}

For our analysis of ergodic cost, we either assume that the running cost function $c$ is near-monotone with respect to $\sE^*(c)$, i.e.,
\begin{itemize}
\item[\hypertarget{A4}{{(A4)}}] It holds that
\begin{equation}\label{ENearmonot}
\liminf_{\norm{x}\to\infty}\inf_{\zeta\in \Act} c(x,\zeta) > \sE^*(c)\,,
\end{equation}
\end{itemize} or else, we assume the following Lyapunov stability condition on the dynamics.
\begin{itemize}
\item[\hypertarget{A5}{{(A5)}}]
There exists a positive constant $\widehat{C}_0$, and a pair of inf-compact  functions $(\Lyap, h)\in \cC^{2}(\Rd)\times\cC(\Rd\times\Act)$ (i.e., the sub-level sets $\{\Lyap\leq k\} \,,\{h\leq k\}$ are compact or empty sets in $\Rd$\,, $\Rd\times\Act$ respectively for each $k\in\RR$) such that
\begin{equation}\label{Lyap1}
\sL_{\zeta}\Lyap(x) \leq \widehat{C}_{0} - h(x,\zeta)\quad \text{for all}\,\,\, (x,\zeta)\in \Rd\times \Act\,,
\end{equation} where  $h$ ($>0$) is locally Lipschitz continuous in its first argument uniformly with respect to the second and $\Lyap > 1$.
\end{itemize} 

\section{Denseness of Smooth Policies}\label{DensSmooth}
In this section we show that under Borkar control topology, smooth/Lipschitz continuous policies are dense in $\Udsm$\,.
\begin{theorem}\label{TApproxSmoothPol1}
Let $v\in \Udsm$. Then there exist a sequence of Lipschitz continuous/ smooth policies $\{\Tilde{v}_{n, \eta}\}$ in $\Udsm$ such that as $\eta\to 0$ and $n\to \infty$, we have
\begin{equation}\label{ENearOptiSmooth1A}
\int_{\Rd} f(x)g(x, \Tilde{v}_{n, \eta}(x)) dx \to \int_{\Rd} f(x)g(x, v(x)) dx      
\end{equation} for all $f\in L^1(\Rd)\cap L^{2}(\Rd)$ and $g\in \cC_{b}(\Rd\times \Act)$\,.
\end{theorem}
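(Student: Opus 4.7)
The plan is to realize $v\in\Udsm$ as $v(x)=\delta_{\psi(x)}$ for a Borel measurable $\psi:\Rd\to\Act$ and to construct $\tilde v_{n,\eta}(x)=\delta_{\tilde\psi_{n,\eta}(x)}$ with $\tilde\psi_{n,\eta}\in\cC^\infty(\Rd;\Act)$ (hence Lipschitz on every bounded set) through a two-step approximation: a Lusin-type continuous approximation indexed by $n$, followed by standard mollification indexed by $\eta$. The convexity of $\Act$ will play a decisive role because (i) the nearest-point retraction $r:\RR^m\to\Act$ is well-defined and $1$-Lipschitz, letting us produce continuous $\Act$-valued extensions, and (ii) convolution of any $\Act$-valued map with a nonnegative mollifier of total mass $1$ automatically takes values in $\Act$.

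Concretely, for each $n\in\NN$ I would apply Lusin's theorem to $\psi|_{\sB_n}$, producing a compact $K_n\subset\sB_n$ with $\abs{\sB_n\setminus K_n}<1/n$ on which $\psi$ is continuous. Extending the continuous map $\psi|_{K_n}$ coordinatewise via Tietze's theorem into $\RR^m$ (alternatively, invoking Dugundji's theorem with convex target) and composing with $r$ yields a continuous $\hat\psi_n:\Rd\to\Act$ agreeing with $\psi$ on $K_n$. Fixing a standard mollifier $\rho_\eta\in\cC_{\mathrm{c}}^{\infty}(\sB_\eta)$ with $\rho_\eta\ge 0$ and $\int_{\Rd}\rho_\eta\,\D x=1$, I set $\tilde\psi_{n,\eta}\df\rho_\eta*\hat\psi_n$; by (ii) above this is a $\cC^\infty$ map into $\Act$.

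For the Borkar convergence, the key step is to upgrade approximation of $\psi$ by $\tilde\psi_{n,\eta}$ to approximation of $g(\cdot,\psi(\cdot))$ by $g(\cdot,\tilde\psi_{n,\eta}(\cdot))$ in $\Lp^2$ on balls. For any fixed $R>0$ and $n\ge R$, $\hat\psi_n=\psi$ off a set of measure $<1/n$ inside $\sB_R$, so $\hat\psi_n\to\psi$ in measure on $\sB_R$ as $n\to\infty$; and for each fixed $n$, classical mollification gives $\tilde\psi_{n,\eta}\to\hat\psi_n$ uniformly on compact sets as $\eta\to 0$. Uniform continuity of $g$ on $\overline{\sB_R}\times\Act$ (by compactness of $\Act$), together with the bound $\abs{g}\le\norm{g}_\infty$, then yields $g(\cdot,\tilde\psi_{n,\eta}(\cdot))\to g(\cdot,\psi(\cdot))$ in $\Lp^2(\sB_R)$ as $\eta\to 0$ followed by $n\to\infty$ (or along any joint sequence via a diagonal selection).

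The final estimate proceeds by splitting
\[
\int_{\Rd} f(x)\bigl[g(x,\tilde\psi_{n,\eta}(x))-g(x,\psi(x))\bigr]\,\D x=\int_{\sB_R}+\int_{\sB_R^c},
\]
bounding the first piece by Cauchy--Schwarz as $\norm{f}_{\Lp^2(\sB_R)}\norm{g(\cdot,\tilde\psi_{n,\eta})-g(\cdot,\psi)}_{\Lp^2(\sB_R)}$, which vanishes in the double limit by the previous paragraph, and the second by $2\norm{g}_\infty\int_{\sB_R^c}\abs{f}\,\D x$, which tends to $0$ as $R\to\infty$ since $f\in\Lp^1(\Rd)$. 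The hard part, I expect, is not the construction but the bookkeeping: ensuring (a) that all approximants stay inside $\Act$ throughout (handled systematically by convexity, via the retraction $r$ in the Tietze step and via the mean-value property of convolutions in the mollification step) and (b) that the double limit in $(n,\eta)$ is compatible with the tail control supplied by $f\in\Lp^1$, which is handled by choosing $R$ first and then $(n,\eta)$.
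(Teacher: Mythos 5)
Your proposal is correct and follows essentially the same route as the paper: a Lusin--Tietze continuous approximation indexed by $n$, followed by mollification indexed by $\eta$, combined via a triangle-inequality/splitting argument. The only difference is that you spell out details the paper outsources to its citation of \cite[Theorem~4.2]{pradhan2022near} or to dominated convergence --- notably the nearest-point retraction and the convexity of $\Act$ to keep all approximants $\Act$-valued, and the Cauchy--Schwarz-plus-$\Lp^1$-tail estimate --- which is a welcome tightening rather than a departure.
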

 \begin{proof}
As in \cite[Theorem~4.2]{pradhan2022near}, by Lusin's theorem and Tietze's extension theorem we have there exists a sequence of continuous functions $\Tilde{v}_{n}$ such that as $n\to \infty$
\begin{equation}\label{ENearOptiSmooth1B}
\int_{\Rd} f(x)g(x, \Tilde{v}_{n}(x)) dx \to \int_{\Rd} f(x)g(x, v(x)) dx      
\end{equation} for all $f\in L^1(\Rd)\cap L^{2}(\Rd)$ and $g\in \cC_{b}(\Rd\times \Act)$\,.

Now, let $\phi \in \cC_c^{\infty}(\Rd)$ with $\phi = 0$ for $|x|>1$ and $\int_{\Rd}\phi(x) d x = 1$\,. Also, for each positive real number $\eta$, define $\phi_{\eta}(x) \df \eta^{-d}\phi(\frac{x}{\eta})$\,. Using this smooth function $\phi$ we define the following sequence of smooth polices $\Tilde{v}_{n, \eta}(x) := \phi_{\eta}* \Tilde{v}_{n}(x)$ (convolution with respect to $x$), for each $\eta>0$. Since, $\Tilde{v}_{n}$ is continuous, it is well known that as $\eta\to 0$ we have $\Tilde{v}_{n, \eta}(x) \to \Tilde{v}_{n}(x)$ for a.e $x\in \Rd$\, (in fact, this convergence is in the sense of uniform convergence over compact sets). Thus, for any $f\in L^1(\Rd)\cap L^{2}(\Rd)$ and $g\in \cC_{b}(\Rd\times \Act)$, by the dominated convergence theorem, as $\eta\to 0$ we deduce that
\begin{equation}\label{ENearOptiSmooth1C}
\int_{\Rd} f(x)g(x, \Tilde{v}_{n, \eta}(x)) dx \to \int_{\Rd} f(x)g(x, \Tilde{v}_{n}(x)) dx \,.     
\end{equation} 
Also, for each $\eta > 0$, it is easy to show that $\frac{\partial \Tilde{v}_{n, \eta}}{\partial x_i} = \frac{\partial \phi_{\eta}}{\partial x_i}* \Tilde{v}_{n}(x)$ is bounded for $i= 1,\dots , d$\,. This, in particular implies that for each $\eta >0$, $\Tilde{v}_{n, \eta}$ is uniformly Lipschitz continuous in  $\Rd$\,. 
By the triangle inequality, for any $f\in L^1(\Rd)\cap L^{2}(\Rd)$ and $g\in \cC_{b}(\Rd\times \Act)$ it is follows that
\begin{align}\label{ENearOptiSmooth1D}
& \abs{\int_{\Rd} f(x)g(x, \Tilde{v}_{n, \eta}(x)) dx - \int_{\Rd} f(x)g(x, v(x)) dx} \nonumber\\
& \leq  \abs{\int_{\Rd} f(x)g(x, \Tilde{v}_{n, \eta}(x)) dx - \int_{\Rd} f(x)g(x, \Tilde{v}_{n}(x)(x)) dx}\nonumber\\
& \qquad + \abs{\int_{\Rd} f(x)g(x, \Tilde{v}_{n}(x)(x)) dx - \int_{\Rd} f(x)g(x, v(x)) dx}  \,.   
\end{align}
Thus, in view of (\ref{ENearOptiSmooth1B}) and (\ref{ENearOptiSmooth1C}), we obtain our result\,.
\end{proof} 
Following the proof technique of Theorem~\ref{TApproxSmoothPol1}, considering time also as a separate parameter (taking mollifires which is function of both variables $t, x$) we have the following theorem.
\begin{theorem}\label{TApproxSmoothPol2}
Let $v\in \Udm$. Then there exist a sequence of Lipschitz continuous/ smooth policies $\{\Tilde{v}_{n, \eta}\}$ in $\Udm$ such that as $\eta\to 0$ and $n\to \infty$, we have
\begin{equation}\label{ENearOptiSmooth1AMarkov}
\int_{0}^{\infty}\int_{\Rd} f(t,x)g(t, x, \Tilde{v}_{n, \eta}(t,x)) dx dt \to \int_{0}^{\infty}\int_{\Rd} f(t,x)g(t, x, v(t,x)) dx dt      
\end{equation} for all $f\in L^1([0,\infty)\times \Rd)\cap L^{2}([0,\infty)\times\Rd)$ and $g\in \cC_{b}([0,\infty)\times\Rd\times \Act)$\,.
\end{theorem}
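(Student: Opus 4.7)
The strategy is to mirror the proof of Theorem~\ref{TApproxSmoothPol1} with $[0,\infty) \times \Rd$ in place of $\Rd$, now treating the time variable symmetrically with the spatial variables and mollifying in $(t,x)$ simultaneously. Since $v \in \Udm$ we have $v(t,x) = \delta_{f(t,x)}$ for some measurable $f \colon [0,\infty) \times \Rd \to \Act$, so it suffices to construct smooth $\Act$-valued approximants $\tilde f_{n,\eta}$ to $f$ and define $\tilde v_{n,\eta} := \delta_{\tilde f_{n,\eta}}$.

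First, I would invoke Lusin's theorem on $[0,\infty) \times \Rd$ (equipped with Lebesgue measure) together with Tietze's extension theorem --- with values kept in the compact convex set $\Act$ --- to obtain a sequence of continuous functions $\tilde f_n \colon [0,\infty) \times \Rd \to \Act$ such that, setting $\tilde v_n := \delta_{\tilde f_n}$,
\begin{equation*}
\int_0^\infty \int_{\Rd} f(t,x)\, g(t,x,\tilde v_n(t,x))\, \D x\, \D t \;\longrightarrow\; \int_0^\infty \int_{\Rd} f(t,x)\, g(t,x,v(t,x))\, \D x\, \D t
\end{equation*}
as $n \to \infty$ for every $f \in L^1([0,\infty) \times \Rd) \cap L^2([0,\infty) \times \Rd)$ and $g \in \cC_b([0,\infty) \times \Rd \times \Act)$. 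This is the space-time analog of the assertion invoked at the start of the proof of Theorem~\ref{TApproxSmoothPol1} via \cite[Theorem~4.2]{pradhan2022near}.

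Next, I would mollify in both coordinates. Extend each $\tilde f_n$ to $\RR \times \Rd$ by $\tilde f_n(t,x) := \tilde f_n(0,x)$ for $t<0$, which preserves continuity and $\Act$-valuedness. Pick $\phi \in \cC_{\mathrm{c}}^\infty(\RR \times \Rd)$ supported in $\{(t,x) \colon |t|^2 + |x|^2 \le 1\}$ with $\int \phi = 1$, set $\phi_\eta(t,x) := \eta^{-(d+1)}\phi(t/\eta, x/\eta)$, and define $\tilde f_{n,\eta} := \phi_\eta * \tilde f_n$ (convolution in $(t,x)$) together with $\tilde v_{n,\eta} := \delta_{\tilde f_{n,\eta}}$. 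Convexity of $\Act$ gives $\tilde f_{n,\eta}(t,x) \in \Act$; smoothness of $\phi$ gives $\tilde f_{n,\eta} \in \cC^\infty$; and boundedness of $\tilde f_n$ together with the boundedness of $\partial_t \phi_\eta,\partial_{x_i}\phi_\eta$ in $L^1$ yields uniformly bounded gradients of $\tilde f_{n,\eta}$, hence uniform Lipschitz continuity on $[0,\infty) \times \Rd$. Continuity of $\tilde f_n$ forces $\tilde f_{n,\eta} \to \tilde f_n$ pointwise (uniformly on compact subsets of $(0,\infty) \times \Rd$) as $\eta \to 0$, so by continuity of $g$ in its $\Act$ argument and dominated convergence with dominating function $|f(t,x)|\,\|g\|_\infty \in L^1([0,\infty) \times \Rd)$,
\begin{equation*}
\lim_{\eta \to 0}\int_0^\infty \int_{\Rd} f(t,x)\, g(t,x,\tilde v_{n,\eta}(t,x))\, \D x\, \D t \;=\; \int_0^\infty \int_{\Rd} f(t,x)\, g(t,x,\tilde v_n(t,x))\, \D x\, \D t.
\end{equation*}
Combining the two limits by a triangle inequality exactly as in (\ref{ENearOptiSmooth1D}) delivers (\ref{ENearOptiSmooth1AMarkov}).

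The only technical care required is at the time boundary $t=0$ --- one must choose a continuous extension to $t<0$ so that $\phi_\eta * \tilde f_n$ is well defined and $\Act$-valued on $[0,\infty) \times \Rd$ --- and in sequencing the joint $(n,\eta)$ limit, for which one takes $\eta = \eta_n \downarrow 0$ along a diagonal subsequence so that both error terms in the triangle inequality vanish simultaneously. Beyond this bookkeeping there is no serious obstacle, since the only genuinely new ingredient over Theorem~\ref{TApproxSmoothPol1} is the extra time variable, which is absorbed symmetrically by the space-time mollifier.
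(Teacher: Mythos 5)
Your proposal is correct and follows essentially the same route as the paper, which proves this theorem by simply remarking that one repeats the argument of Theorem~\ref{TApproxSmoothPol1} with a mollifier in both variables $(t,x)$. Your additional care about the extension across $t=0$ and the diagonal choice $\eta=\eta_n$ is sensible bookkeeping that the paper leaves implicit.
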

\section{Near Optimality of Smooth Policies}\label{NearSmoothOpti}
From \cite{pradhan2022near} we have the following continuity results of the various cost evaluation criteria as a function of the control policies\,.
\begin{theorem}\label{T1.1}
Suppose Assumptions \hyperlink{A1}{{(A1)}}--\hyperlink{A3}{{(A3)}} hold. Then, under Borkar control topology (see Definition~\ref{DefBorkarTopology1A}, \ref{BKTP1})
\begin{itemize}
\item[(i)]\emph{Finite horizon cost:} the map $v\mapsto\cJ_{T}(x, v)$\, from $\Um$ to $\RR$ is continuous\,.
\item[(ii)]\emph{Discounted cost:} the map $v\mapsto\cJ_{\alpha}^{v}(x, c)$\, from $\Usm$ to $\RR$ is continuous\,.
\item[(iii)]\emph{Ergodic cost:} (under additional assumption \hyperlink{A4}{{(A4)}} or \hyperlink{A5}{{(A5)}}) the map $v\mapsto\sE_{x}(c, v)$\, from $\Usm$ to $\RR$ is continuous\,.
\item[(iv)] \emph{Cost up to an exit time:} the map $v\mapsto\hat{\cJ}_{e}^{v}(x)$\, from $\Usm$ to $\RR$ is continuous\,.
\end{itemize}
\end{theorem}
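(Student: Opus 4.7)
My plan is to exploit the PDE characterization of each value function together with stability of solutions to second-order linear elliptic/parabolic equations under weak convergence of coefficients. For each fixed policy $v$, the associated value function is the unique (in a suitable class) solution of a linear problem driven by $\sL_v$ and $c(\cdot, v(\cdot))$. Given a Borkar-convergent sequence $v_n\to v$, the strategy is to establish precompactness of the value functions in a sufficiently strong local topology, show that any cluster point solves the limiting PDE, and invoke uniqueness to conclude convergence of the full sequence.

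For the $\alpha$-discounted case, $J_n(x)\df \cJ_\alpha^{v_n}(x,c)$ solves $\sL_{v_n}J_n + c(\cdot,v_n) = \alpha J_n$. Boundedness of $c$ yields $\norm{J_n}_\infty\le \norm{c}_\infty/\alpha$; combined with \hyperlink{A3}{(A3)} and local boundedness of $b$, Krylov--Safonov together with $\Sob^{2,p}$ elliptic regularity give uniform $\Sob^{2,p}_{\mathrm{loc}}$ bounds for every $p<\infty$, and hence relative compactness in $\cC^{1,\beta}_{\mathrm{loc}}$. Passing to a subsequence, $J_{n_k}\to J^*$ in $\cC^{1,\beta}_{\mathrm{loc}}$. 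The Borkar topology is tailored precisely so that one can pass to the limit in the drift term: for any cut-off $\varphi\in\cC_c^\infty(\Rd)$, first replace $\grad J_{n_k}$ by $\grad J^*$ using local uniform convergence of gradients, then apply \eqref{BorkarTopology} with $f=\varphi\in L^1\cap L^2$ and $g(x,\zeta)= b(x,\zeta)\cdot\grad J^*(x)$, which is bounded continuous on the support of $\varphi$. The running-cost term is handled identically with $g=c$. Thus $J^*$ weakly (hence, by elliptic regularity, strongly) solves the PDE for $v$, and uniqueness of bounded solutions in $\Sob^{2,p}_{\mathrm{loc}}$ forces $J^* = \cJ_\alpha^v(\cdot,c)$. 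Since every subsequential limit coincides with this, the full sequence converges pointwise.

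The finite-horizon case is analogous, replacing elliptic regularity with interior parabolic estimates of Ladyzhenskaya--Solonnikov--Ural'tseva and invoking the time-dependent Borkar topology of Definition~\ref{BKTP1}. The cost-up-to-exit case uses elliptic $\Sob^{2,p}$ estimates up to the smooth boundary $\partial O$ together with the regularity assumptions on $\delta$ and $c_e$; the additional zero-th order term involving $\delta$ passes to the limit directly via the Borkar convergence, since $\delta$ is bounded continuous in $(x,\zeta)$.

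The main obstacle is the ergodic case, because the state space is unbounded and continuity reduces to convergence of the associated invariant measures $\mu_{v_n}\to\mu_v$, via $\sE_x(c,v_n)=\int c(x,v_n(x))\,\mu_{v_n}(\D x)$. Under \hyperlink{A4}{(A4)} I would run a vanishing-discount analysis on the normalized values $V_\alpha^n-V_\alpha^n(0)$, using near-monotonicity of $c$ to obtain uniform local bounds and tightness, and to identify the limit as the solution of the ergodic HJB equation for $v$. Under \hyperlink{A5}{(A5)} the Foster--Lyapunov inequality \eqref{Lyap1} gives the uniform bound $\int h\,\D\mu_{v_n}\le\widehat C_0$, which by inf-compactness of $h$ yields tightness of $\{\mu_{v_n}\}$; weak limits are identified with $\mu_v$ by passing to the limit in the adjoint Fokker--Planck equation $\sL_{v_n}^*\mu_{v_n}=0$ tested against $\varphi\in\cC_c^\infty(\Rd)$, which once again fits the Borkar-topology integrand pattern exactly.
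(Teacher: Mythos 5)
The paper does not actually prove Theorem~\ref{T1.1}: its ``proof'' is a one-line citation to \cite{pradhan2022near} (Theorems~6.1, 3.1, 3.5, 3.8 and 3.2 there). Your proposal therefore supplies a self-contained argument where the paper supplies none, and the strategy you outline --- fix the policy, characterize the cost as the unique solution of a linear elliptic/parabolic problem, extract uniform $\Sob^{2,p}_{\mathrm{loc}}$ (resp.\ $\Sob^{1,2,p}_{\mathrm{loc}}$) bounds from \hyperlink{A3}{(A3)} and Krylov--Safonov, pass to the limit in the drift and running-cost terms by pairing a compactly supported test function with $g(x,\zeta)=b(x,\zeta)\cdot\grad J^*(x)$ (after a cutoff, since this $g$ is only locally bounded) via \cref{BorkarTopology}, and conclude by uniqueness --- is exactly the mechanism used in the cited reference. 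For parts (i), (ii) and (iv) your outline is essentially complete and correct.

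The genuine gaps are in part (iii). Under \hyperlink{A5}{(A5)}, tightness of $\{\mu_{v_n}\}$ from $\int h\,\D\mu_{v_n}\le\widehat C_0$ and identification of weak limits through $\sL_{v_n}^{*}\mu_{v_n}=0$ are fine, but the last step $\int c(x,v_n(x))\,\mu_{v_n}(\D x)\to\int c(x,v(x))\,\mu_v(\D x)$ involves a product in which \emph{both} the integrand and the measure vary with $n$; weak convergence of $\mu_{v_n}$ plus Borkar convergence of $v_n$ does not by itself give convergence of the pairing. You need convergence of the invariant densities in $L^{1}_{\mathrm{loc}}$ (available from Harnack and elliptic estimates for the stationary Kolmogorov equation), split $c(\cdot,v_n)\rho_{v_n}=c(\cdot,v_n)\rho_v+c(\cdot,v_n)(\rho_{v_n}-\rho_v)$, and only then apply the Borkar pairing with $f$ a truncation of $\rho_v$. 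Under \hyperlink{A4}{(A4)} the proposal is only a plan: a vanishing-discount argument controls the \emph{optimal} ergodic value, whereas here you must control $\sE_x(c,v_n)$ for an arbitrary convergent sequence of policies, some of which may be unstable (in which case the $\limsup$ in \cref{ErgCost1} is evaluated without an invariant measure). The near-monotone hypothesis is what rules out cost escaping to infinity being \emph{favorable}, and the standard argument uses it to show that any sequence with $\sup_n\sE_x(c,v_n)$ near $\sE^*(c)$ has tight empirical measures; as stated, your sketch does not explain why the limit of $\sE_x(c,v_n)$ is controlled for a non-optimal limit policy $v$, and this is precisely the delicate point of \cite[Theorem~3.5]{pradhan2022near}.
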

\begin{proof}
(i) follows from \cite[Theorem~6.1]{pradhan2022near}, (ii) follows from \cite[Theorem~3.1]{pradhan2022near}, under \hyperlink{A4}{{(A4)}} (near-monotone type structural assumption on the running cost) (iii) follows from \cite[Theorem~3.5]{pradhan2022near}, under \hyperlink{A5}{{(A5)}} (Lyapunov stability assumption) (iii) follows from \cite[Theorem~3.8]{pradhan2022near}, and (iv) follows from \cite[Theorem~3.2]{pradhan2022near}\,.
\end{proof} 

The following theorem proves near-optimality of smooth/Lipschitz continuous policies for various cost evaluation criteria\,.
\begin{theorem}\label{T1.2}
Suppose Assumptions \hyperlink{A1}{{(A1)}}--\hyperlink{A3}{{(A3)}} hold. Then for any $\epsilon > 0$, we have 
\begin{itemize}
\item[(i)]\emph{Finite horizon cost:} there exists a smooth/Lipschitz continuous policy $v_{\epsilon}\in \Udm$ such that $$\cJ_{T}(x, v_{\epsilon})\leq \cJ_{T}^*(x) + \epsilon\,.$$
\item[(ii)]\emph{Discounted cost:} there exists a smooth/Lipschitz continuous policy $v_{\epsilon}\in \Udsm$ such that  $$\cJ_{\alpha}^{v_{\epsilon}}(x, c)\leq V_{\alpha}(x) + \epsilon\,.$$
\item[(iii)]\emph{Ergodic cost:} (under additional assumption \hyperlink{A4}{{(A4)}} or \hyperlink{A5}{{(A5)}}) there exists a smooth/Lipschitz continuous policy $v_{\epsilon}\in \Udsm$ such that $$\sE_{x}(c, v_{\epsilon})\leq \sE^*(c) + \epsilon\,.$$
\item[(iv)] \emph{Cost up to an exit time:} there exists a smooth/Lipschitz continuous policy $v_{\epsilon}\in \Udsm$ such that  $$\hat{\cJ}_{e}^{v_{\epsilon}}(x) \leq \hat{\cJ}_{e}^{*}(x) + \epsilon\,.$$
\end{itemize}
\end{theorem}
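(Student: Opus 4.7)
The plan is to reduce each of the four statements to a common scheme consisting of three ingredients: (a) existence of an optimal policy within the class of deterministic Markov or deterministic stationary Markov policies, (b) denseness of smooth/Lipschitz policies in that class under the Borkar topology (\cref{TApproxSmoothPol1,TApproxSmoothPol2}), and (c) continuity of the induced cost in the Borkar topology (\cref{T1.1}). Given $\epsilon>0$, I would fix an optimal deterministic policy, approximate it by smooth/Lipschitz policies, and use continuity to push the approximation error in policy space to an error of at most $\epsilon$ in cost.

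More concretely, for the finite horizon cost I would first invoke the standard HJB verification theorem for controlled diffusions under \hyperlink{A1}{(A1)}--\hyperlink{A3}{(A3)} (with bounded, continuous running cost) to exhibit an optimal $v^{*}\in\Udm$ achieving $\cJ_T(x,v^{*})=\cJ_T^{*}(x)$. By \cref{TApproxSmoothPol2} there is a sequence $\{\Tilde v_{n,\eta}\}\subset \Udm$ of smooth (hence Lipschitz) policies with $\Tilde v_{n,\eta}\to v^{*}$ in the Borkar topology on $\Um$. By \cref{T1.1}(i) the map $v\mapsto \cJ_T(x,v)$ is continuous on $\Um$, so $\cJ_T(x,\Tilde v_{n,\eta})\to \cJ_T(x,v^{*})=\cJ_T^{*}(x)$, and picking $n,\eta$ appropriately yields $v_\epsilon:=\Tilde v_{n,\eta}$ with $\cJ_T(x,v_\epsilon)\le \cJ_T^{*}(x)+\epsilon$. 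The same three-step argument, now with \cref{TApproxSmoothPol1} and \cref{T1.1}(ii), produces the Lipschitz/smooth $v_\epsilon\in\Udsm$ for the $\alpha$-discounted cost, starting from a deterministic stationary Markov minimizer of \cref{OPDcost} (which exists by the classical discounted HJB theory for nondegenerate diffusions). The cost-up-to-exit-time case (iv) is handled identically via \cref{T1.1}(iv) and the Dirichlet HJB equation on $O$ to produce an optimal $v^{*}\in\Udsm$, followed by smooth approximation and continuity.

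For the ergodic case (iii), I would treat the two structural hypotheses separately. Under the near-monotone assumption \hyperlink{A4}{(A4)} there is an optimal stationary Markov measurable selector $v^{*}\in\Udsm$ from the ergodic HJB equation (as in \cite{ABG-book}); the smooth/Lipschitz approximation and \cref{T1.1}(iii) then give $\sE_x(c,v_\epsilon)\le \sE^{*}(c)+\epsilon$. Under the Lyapunov condition \hyperlink{A5}{(A5)} an optimal stationary selector exists with $\sE_x(c,v^{*})=\sE^{*}(c)$ for every $x$, and the same argument applies. In both cases I only need Borkar-continuity of the ergodic map on $\Usm$, which is exactly what \cref{T1.1}(iii) delivers.

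The main obstacle, and the only nontrivial check beyond invoking the two ingredients already in the excerpt, is to make sure that for each criterion an \emph{optimal deterministic Markov} (respectively, \emph{deterministic stationary Markov}) policy is actually available, since \cref{TApproxSmoothPol1,TApproxSmoothPol2} approximate elements of $\Udsm$ and $\Udm$ rather than of $\Usm$ and $\Um$. Under \hyperlink{A1}{(A1)}--\hyperlink{A3}{(A3)} (plus \hyperlink{A4}{(A4)} or \hyperlink{A5}{(A5)} for the ergodic case) this is standard via measurable-selection arguments applied to the relevant HJB equations, so no genuinely new ingredient is needed; one only has to cite it carefully in each of the four settings. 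A minor secondary point is that for the ergodic case, convergence of the costs along the approximating sequence holds for every initial state $x$ by \cref{T1.1}(iii), which is why the single smooth/Lipschitz policy $v_\epsilon$ suffices to yield the stated inequality.
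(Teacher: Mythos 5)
Your plan is correct and follows essentially the same route as the paper: obtain an optimal deterministic Markov (resp.\ stationary Markov) selector from the relevant HJB equation, approximate it by smooth/Lipschitz policies via the denseness theorems, and conclude by Borkar-continuity of the cost. The only point worth noting is that for the finite-horizon case you correctly invoke the time-dependent denseness result (\cref{TApproxSmoothPol2}), and that the paper's verification of an optimal $v^*\in\Udm$ there rests on additional boundedness/regularity hypotheses on $b$, $a$, $c$, and $c_T$ which you should state explicitly rather than subsume under ``standard HJB verification.''
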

\begin{proof}
\emph{Finite horizon cost:} under the additional assumption that $b, a, \frac{\partial a}{\partial x_i}, c$ are uniformly bounded and the terminal cost $c_{T}\in \Sob^{2,p,\mu}(\Rd)\cap \Lp^{\infty}(\Rd)$\,,\,\, $p\ge 2$\, (see \cite[Section~6, p. 26]{pradhan2022near}), from \cite[Theorem~3.3, p. 235]{BL84-book}, we have that the optimality equation (or, the HJB equation)
\begin{align}\label{EfiniteHJB}
&\frac{\partial \psi}{\partial t} + \inf_{\zeta\in \Act}\left[\sL_{\zeta}\psi + c(x, \zeta) \right] = 0 \nonumber\\
& \psi(T,x) = c_{T}(x)
\end{align} admits a unique solution $\psi\in \Sob^{1,2,p,\mu}((0, T)\times\Rd)\cap \Lp^{\infty}((0, T)\times\Rd)$\,,\,\, $p\ge 2$\,. Thus, by It\^{o}-Krylov formula (see the standard verification results as in \cite[Theorem~3.5.2]{HP09-book}), we have there exists an optimal Markov policy (which is a measurable minimizing selector of the optimality equation (\ref{EfiniteHJB})), that is, there exists $v^*\in \Udm$ such that $\cJ_{T}(x, v^*) = \cJ_{T}^*(x)$\,. Also, from Theorem~\ref{T1.1}, we have that the map $v\mapsto\cJ_{T}(x, v)$\, from $\Um$ to $\RR$ is continuous\,. Thus, by the denseness of smooth/Lipschitz continuous policies (see Theorem~\ref{TApproxSmoothPol1}), for any $\epsilon > 0$ there exists a smooth policy $v_{\epsilon}$ such that
$$\cJ_{T}(x, v_{\epsilon})\leq \cJ_{T}(x, v^{*}) + \epsilon\,.$$ This proves $(i)$

\emph{Discounted cost:} from \cite[Theorem~3.5.6]{ABG-book}, we have that the optimal discounted cost $V_{\alpha}$ defined in \cref{OPDcost} is the unique solution in $\cC^2(\Rd)\cap\cC_b(\Rd)$ of the HJB equation
\begin{equation}\label{OptDHJB}
\min_{\zeta \in\Act}\left[\sL_{\zeta}V_{\alpha}(x) + c(x, \zeta)\right] = \alpha V_{\alpha}(x) \,,\quad \text{for all\ }\,\, x\in\Rd\,.
\end{equation}
Moreover, there exists $v^*\in \Udsm$ satisfying
\begin{equation}\label{OtpHJBSelc}
b(x,v^*(x))\cdot \grad V_{\alpha}(x) + c(x, v^*(x)) = \min_{\zeta\in \Act}\left[ b(x, \zeta)\cdot \grad V_{\alpha}(x) + c(x, \zeta)\right]\quad \text{a.e.}\,\,\, x\in\Rd
\end{equation} is $\alpha$-discounted optimal, that is, $\cJ_{\alpha}^{v^*}(x, c) = V_{\alpha}(x)$\,. In Theorem~\ref{T1.1}, we have that shown that the map $v\mapsto\cJ_{\alpha}^{v}(x, c)$\, from $\Usm$ to $\RR$ is continuous\,. Thus, by the density of smooth/Lipschitz continuous policies (see Theorem~\ref{TApproxSmoothPol1}), we deduce that for any $\epsilon > 0$ there exists a smooth/Lipschitz continuous policy $v_{\epsilon}$ such that
$$\cJ_{\alpha}^{v_{\epsilon}}(x, c)\leq V_{\alpha}(x) + \epsilon\,.$$ This proves $(ii)$\,.

\emph{Ergodic cost:} from \cite[Theorem~3.6.10]{ABG-book} (under \hyperlink{A4}{{(A4)}}), \cite[Theorem~3.7.11, Theorem~3.7.12]{ABG-book} (under \hyperlink{A5}{{(A5)}}) we know that there exists a unique solution pair $(V, \rho)\in \Sobl^{2,p}(\Rd)\times \RR$, \, $1< p < \infty$, with $V(0) = 0$ and $\inf_{\Rd} V > -\infty$ and $\rho \leq \sE^*(c)$, satisfying the ergodic HJB equation
\begin{equation}\label{EErgonearOpt1A}
\rho = \min_{\zeta \in\Act}\left[\sL_{\zeta}V(x) + c(x, \zeta)\right]\,.
\end{equation}
Moreover, there exist $v^*\in \Udsm$ satisfying
\begin{equation}\label{EErgonearOpt1B}
\min_{\zeta \in\Act}\left[\sL_{\zeta}V(x) + c(x, \zeta)\right] \,=\, \trace\bigl(a(x)\grad^2 V(x)\bigr) + b(x,v^*(x))\cdot \grad V(x) + c(x, v^*(x))\,,\quad \text{a.e.}\,\, x\in\Rd\,,
\end{equation} is an ergodic optimal policy, that is, $\sE_{x}(c, v^*)= \sE^*(c)$\,. Now, by the continuity of the map $v\mapsto\sE_{x}(c, v)$ from $\Usm$ to $\RR$ (see  Theorem~\ref{T1.1}), and the denseness of smooth/Lipschitz continuous policies (see Theorem~\ref{TApproxSmoothPol1}), we conclude that for any $\epsilon > 0$ there exists a smooth/Lipschitz continuous policy $v_{\epsilon}$ such that
$$\sE_{x}(c, v_{\epsilon})\leq \sE^*(c) + \epsilon\,.$$ This proves $(iii)$\,.

\emph{Cost up to an exit time:} from \cite[p.229]{B05Survey} (also, following the argument as in the proof of \cite[Theorem~3.5.3]{ABG-book}) we have that the HJB equation  
\begin{align}\label{EHJBExit}
\min_{\zeta \in\Act}\left[\sL_{\zeta}\phi(x) - \delta(x, \zeta) \phi(x) + c(x, \zeta)\right] =  0\,,\quad \text{for all\ }\,\, x\in O\,,\quad\text{with}\quad
 \phi = c_{e}\,\,\, \text{on}\,\,\, \partial{O}\,,
\end{align} admits unique solution $\phi\in \Sob^{2,p}(O)$, \, $1< p < \infty$ \,. Moreover, by It$\hat{o}$-Krylov formula (as in the proof of \cite[Theorem~3.5.6]{ABG-book}), we have there exists $v^*\in \Udsm$ (measurable minimizing selector of (\ref{EHJBExit})), such that $\hat{\cJ}_{e}^{v^*}(x) = \hat{\cJ}_{e}^{*}(x)$\,. Thus, by the continuity of the map $v\mapsto\hat{\cJ}_{e}^{v}(x)$ from $\Usm$ to $\RR$ (see  Theorem~\ref{T1.1}), and the denseness of smooth/Lipschitz continuous policies (see Theorem~\ref{TApproxSmoothPol1}), it follows that for any $\epsilon > 0$ there exists a smooth/Lipschitz continuous policy $v_{\epsilon}$ such that
$$\hat{\cJ}_{e}^{v_{\epsilon}}(x)\leq \hat{\cJ}_{e}^{*}(x) + \epsilon\,.$$ This completes the proof\,. 
\end{proof}
\section*{Conclusion}
We studied the approximation of optimal control strategies by smooth/Lipschitz continuous policies for both finite/infinite horizon optimal control problems in a wide class of controlled diffusions in the entire space $\Rd$\,. Under a mild set of assumptions, we demonstrate that smooth/Lipschitz continuous policies are dense in the Markov/stationary Markov policies space under the Borkar topology. Next, by taking advantage of the continuity of costs with finite horizon/infinite horizon as a function of policies in the Markov/stationary Markov policy space (respectively) under the Borkar topology, we demonstrate that optimal policies can be closely approximated by smooth/Lipschitz continuous policies. Similar regularity results can be carried out in discrete-time under the ergodic cost criteria \cite{yuksel2023borkar}.

\subsection*{Acknowledgement}
Research was partially supported by the Natural Sciences and Engineering Research Council of Canada (NSERC)\,.
\bibliography{Quantization,SerdarBibliography}

@book{ABG-book,
    AUTHOR = {Arapostathis, A. and Borkar, V. S. and
              Ghosh, M. K.},
     TITLE = {Ergodic control of diffusion processes},
    SERIES = {Encyclopedia of Mathematics and its Applications},
    VOLUME = {143},
 PUBLISHER = {Cambridge University Press},
   ADDRESS = {Cambridge},
      YEAR = {2012},
      MRNUMBER = {2884272}}

@book {KD92,
    AUTHOR = {H. J. Kushner and P. G. Dupuis},
     TITLE = {Numerical Methods for Stochastic Control Problems in Continuous Time},
 PUBLISHER = {Springer-Verlag, Berlin, New York,},
      YEAR = {2001}}

@book {KH77,
    AUTHOR = {H. J. Kushner},
     TITLE = {Probability Methods for Approximations in Stochastic Control and for Elliptic Equations},
    SERIES = {Math. Sci. Eng.},
 PUBLISHER = {Academic Press, New York},
    VOLUME = {129},
      YEAR = {1977}}

@book {KH01,
    AUTHOR = {H. J. Kushner},
     TITLE = {Heavy Traffic Analysis of Controlled Queueing and Communication Networks},
    SERIES = {Stoch. Model. Appl. Probab.},
 PUBLISHER = {Springer-Verlag, New York},
    VOLUME = {47},
      YEAR = {2001} }

@book {Adams,
    AUTHOR = {Adams, R. A.},
     TITLE = {Sobolev Spaces},
 PUBLISHER = {Academic Press, New York},
      YEAR = {1975},}

@book{HP09-book,
    AUTHOR = {Pham, H.},
     TITLE = {Continuous-time stochastic control and applications with financial applications},
    SERIES = {Stochastic Modelling and Applied Probability},
    VOLUME = {61},
 PUBLISHER = {Springer},
      YEAR = {2009}}

@book{BL84-book,
    AUTHOR = {Bensoussan, A. and Lions, J. L.},
     TITLE = {Impulse Control and Quasi-Variational Inequalities},
    PUBLISHER = {Bristol: Gauthier-Villars},
      YEAR = {1984} }

@article{BG90,
   AUTHOR = {Borkar, V. S. and Ghosh, M. K.},
   TITLE  = {Controlled diffusions with constraints},
  JOURNAL = {Journal of Mathematical Analysis and Applications},
   VOLUME = {152},
   NUMBER = {1},
    PAGES = {88--108},
     YEAR = {1990},
      DOI = {10.1016/0022-247X(90)90094-V},
      URL = {https://doi.org/10.1016/0022-247X(90)90094-V},
}

@article{B05Survey,
 author={Borkar, V. S.},
  title={Controlled diffusion processes},
journal={Probab. Surveys},
 volume={2},
  pages={213--244},
   year={2005},
   DOI = {10.1214/154957805100000131}}

@article{pradhanselkyuksel2023,
  title={Robustness of Optimal Controlled Diffusions with Near-Brownian Noise via Rough Paths Theory},
  author={S. Pradhan and Z. Selk and S. Y\"uksel},
  journal={arXiv:2310.09967},
  year={2023}
}

@article{pradhan2022near,
 author = {S. Pradhan and S. Y{\"u}ksel},
  title = {{Continuity of cost in Borkar control topology and implications on discrete space and time approximations for controlled diffusions under several criteria}},
 volume = {29},
journal = {Electronic Journal of Probability},
publisher = {Institute of Mathematical Statistics and Bernoulli Society},
pages = {1 -- 32},
year = {2024},
doi = {10.1214/24-EJP1093}
}

@article{JZ-JMLR23,
  author  = {Yanwei Jia and Xun Yu Zhou},
  title   = {q-Learning in Continuous Time},
  journal = {Journal of Machine Learning Research},
  year    = {2023},
  volume  = {24},
  number  = {161},
  pages   = {1--61},
  url     = {http://jmlr.org/papers/v24/22-0755.html}
}

@article {Bor89,
    AUTHOR = {Borkar, V. S.},
     TITLE = {A topology for markov controls},
   JOURNAL = {Applied Mathematics and Optimization},
  FJOURNAL = {Applied Mathematics and Optimization},
    VOLUME = {20},
      YEAR = {1989},
     PAGES = {55--62}
     }

@article{reisinger2023linear,
  title={Linear convergence of a policy gradient method for some finite horizon continuous time control problems},
  author={C. Reisinger and W. Stockinger and Y. Zhang},
  journal={SIAM Journal on Control and Optimization},
  volume={61},
  number={6},
  pages={3526--3558},
  year={2023},
  publisher={SIAM}
}

@inproceedings{lechner2022stability,
  title={Stability verification in stochastic control systems via neural network supermartingales},
  author={M. Lechner and {\DJ}. {\v{Z}}ikeli{\'c} and K. Chatterjee and T. A. Henzinger},
  booktitle={Proceedings of the AAAI Conference on Artificial Intelligence},
  volume={36},
  number={7},
  pages={7326--7336},
  year={2022}
}

@article{fazlyab2019efficient,
  title={Efficient and accurate estimation of lipschitz constants for deep neural networks},
  author={M. Fazlyab and A. Robey and H. Hassani and M. Morari and G. Pappas},
  journal={Advances in neural information processing systems},
  volume={32},
  year={2019}
}

@inproceedings{richards2018lyapunov,
  title={The lyapunov neural network: Adaptive stability certification for safe learning of dynamical systems},
  author={S. M. Richards and F. Berkenkamp and A. Krause},
  booktitle={Conference on Robot Learning},
  pages={466--476},
  year={2018},
  organization={PMLR}
}

@article{chang2019neural,
  title={Neural lyapunov control},
  author={Y.-C. Chang and N. Roohi and S. Gao},
  journal={Advances in neural information processing systems},
  volume={32},
  year={2019}
}

@article{yuksel2023borkar,
  title={On {B}orkar and {Y}oung Relaxed Control Topologies and Continuous Dependence of Invariant Measures on Control Policy},
  author={S. Y{\"u}ksel},
  journal={SIAM Journal on Control and Optimization (to appear); arXiv:2304.07685},
  year={2024},
    publisher={SIAM}
}

@book{fleming2012deterministic,
  title={Deterministic and stochastic optimal control},
  author={W. H. Fleming and R. W. Rishel},
%  volume={1},
  year={2012},
  publisher={Springer}
}

@article{Quantization,
author = {R. M. Gray and D. L. Neuhoff},
title = {Quantization},
journal = {IEEE Transactions on Information Theory},
volume = {44},
year = {1998},
month = {October},
pages = {2325-2384},
}

@article{CainesMeanField1,
author = {M. Huang and R.P. Malham\'e and P.E. Caines},
title = {Large Population Stochastic Dynamic Games: Closed-Loop {M}c{K}ean-{V}lasov Systems and the {N}ash Certainty Equivalence Principle},
journal = {Special
issue in honour of the 65th birthday of Tyrone Duncan, Communications in Information
and Systems},
volume = {6},
year = {2006},
pages = {221-252},
}

\end{document}